\documentclass[12pt]{amsart}

\setlength{\textheight}{23cm}
\setlength{\textwidth}{16cm}
\setlength{\topmargin}{-0.8cm}
\setlength{\parskip}{0.3\baselineskip}
\hoffset=-1.4cm

\newtheorem{theorem}{Theorem}
\newtheorem{lemma}[theorem]{Lemma}
\newtheorem{corollary}[theorem]{Corollary}
\newtheorem{proposition}[theorem]{Proposition}

\begin{document}

\title[A note on real algebraic groups]{A note on real algebraic groups}

\author[H. Azad]{Hassan Azad}

\address{Department of Mathematics and Statistics, King Fahd University,
Saudi Arabia}

\email{hassanaz@kfupm.edu.sa}

\author[I. Biswas]{Indranil Biswas}

\address{School of Mathematics, Tata Institute of Fundamental Research, Homi
Bhabha Road, Bombay 400005, India}

\email{indranil@math.tifr.res.in}

\subjclass[2000]{20G20, 22E15, 14L30}

\keywords{Real algebraic group, Cartan subgroup, solvable group, group
action.}

\date{}

\begin{abstract}
The efficacy of using complexifications to understand the structure of real
algebraic groups is demonstrated. In particular the following result is
proved. If $L$ is an algebraic subgroup of a connected real algebraic group
$G\, \subset\,\mathrm{GL}(n, {\mathbb{R}})$ such that the complexification of
$L$ contains a maximal torus of the complexification of $G$, then $L$
contains a Cartan subgroup of $G$.
\end{abstract}

\maketitle

\baselineskip=15.5pt

\section{Introduction}

The usefulness of the point of view of algebraic groups for understanding
Lie groups is demonstrated in the book of Onishchik--Vinberg \cite{OV}. In
this note we prove, in the same spirit, some results on real algebraic
groups.

If one uses this point of view, many results in real Lie groups follow
easily from known results in complex algebraic groups-for example the
existence of Cartan subgroups and the structure of real parabolic subgroups.

Our main result is the following (see Proposition \ref{prop2}):

\textit{Let $L$ be an algebraic subgroup of a connected real algebraic group
$G\, \subset\,\mathrm{GL}(n, {\mathbb{R}})$ such that the complexification
$L^{\mathbb{C}}$ of $L$ contains a maximal torus of the complexification
$G^{\mathbb{C}}$ of $G$. Then $L$ contains a Cartan subgroup of $G$.}

The following geometric result is also proved (see Proposition \ref{prop1}):

\textit{Let $G$ be a solvable real algebraic group whose eigenvalues are all
real. For any algebraic action of $G^{\mathbb{C}}$ on a complex variety, any
compact orbit of $G$ is a point.}

Recall that a subalgebra of a complex semisimple Lie algebra is a Cartan
subalgebra if it is maximal abelian and represented in the adjoint
representation by semisimple endomorphisms \cite[p. 162]{He}. This coincides
with the definition of Cartan subalgebras given in e.g \cite[p. 133]{Kn}, as
shown in \cite[p. 135]{Kn}. In this paper, a connected subgroup $C$ of a
real algebraic group $G$ is a Cartan subgroup if its complexification
$C^{\mathbb{C}}$ in the complexification $G^{\mathbb{C}}$ of $G$ is a maximal
algebraic torus in the sense of \cite[p. 133]{OV}.

\section{Real algebraic groups}

The reader is referred to \cite{OV} for general facts about real algebraic
groups. However, for technical reasons, we find it more convenient to modify
the definition of real algebraic groups as follows:

A connected subgroup $G\, \subset\, \text{GL}(n, {\mathbb{R}})$ will be
called a \textit{real algebraic group} if the connected subgroup of
$\text{GL}(n, {\mathbb{C}})$ whose Lie algebra of $\text{Lie}(G)+ \sqrt{-1}\cdot
\text{Lie}(G)$ is an algebraic subgroup of $\text{GL}(n, {\mathbb{C}})$.

The Lie algebra $\text{Lie}(G)$ will be denoted by $\mathfrak{g}$. The above
connected subgroup of $\text{GL}(n, {\mathbb{C}})$ with Lie algebra
${\mathfrak{g}}+ \sqrt{-1}\cdot{\mathfrak{g}}$ will be denoted by $G^{\mathbb{C}}$.

The group $G^{\mathbb{C}}$ is generated by the complex one-parameter
subgroups $\{\exp(zX)\,\mid\, z\,\in\, {\mathbb{C}}\}$, $X\, \in\, \mathfrak{g}$.

The connected component, containing the identity element, of the Zariski
closure of $G$ is $G^{\mathbb{C}}$, and the connected component, containing
the identity element, of the set of real points of $G^{\mathbb{C}}$ is $G$.

We will call the set of real points of a Zariski closure of a subset as the
real Zariski closure of the subset.

We have also included details that were not given in the earlier paper \cite{AB}
on solvable real algebraic groups which have all positive eigenvalues.
Classical references for this subject are the papers of Mostow \cite{Mo} and
Matsumoto \cite{Ma}.

One of the main technical tools is the following version of Lie's theorem
for real solvable groups. Recall that, by definition, a real algebraic group
is connected.

\begin{proposition}
\label{thm1} A real solvable algebraic subgroup $G\, \subset\, \mathrm{GL}
(n, {\mathbb{R}})$ is real conjugate to a subgroup of a block triangular
subgroup, whose unipotent part is the unipotent radical of a standard real
parabolic subgroup of $\mathrm{GL}(n, {\mathbb{R}})$ given by mutually
orthogonal roots and whose semisimple part is a Cartan subgroup of the same
parabolic subgroup.
\end{proposition}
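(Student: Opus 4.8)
The plan is to produce the conjugating matrix directly from a composition series of the defining representation $\mathbb{R}^n$ of $G$, invoking Lie's theorem over $\mathbb{C}$ only in order to bound the sizes of the irreducible pieces. First I would choose a composition series $0\,=\,W_0\,\subset\,W_1\,\subset\,\cdots\,\subset\,W_k\,=\,\mathbb{R}^n$ of $\mathbb{R}^n$ as a $G$-module, with each $M_j\,:=\,W_j/W_{j-1}$ an irreducible real $G$-module, and I would show that $\dim_{\mathbb{R}}M_j\,\leq\,2$ for every $j$. The complexification $M_j\otimes_{\mathbb{R}}\mathbb{C}$ is naturally a module over $\mathfrak{g}+\sqrt{-1}\cdot\mathfrak{g}$, which is solvable because $\mathfrak{g}$ is; so by Lie's theorem $M_j\otimes\mathbb{C}$ contains a one-dimensional $(\mathfrak{g}+\sqrt{-1}\cdot\mathfrak{g})$-submodule $W'$. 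A complex subspace stable under $\mathfrak{g}+\sqrt{-1}\cdot\mathfrak{g}$ is in particular stable under $\mathfrak{g}$, hence, $G$ being connected, under $G$; thus $W'$ is a one-dimensional $G$-submodule. If $\tau$ denotes the conjugation of $M_j\otimes\mathbb{C}$ fixing the real form $M_j$, then $\tau(W')$ is again a one-dimensional $G$-submodule, so $W'+\tau(W')$ is a nonzero, $\tau$-stable $G$-submodule of complex dimension at most $2$; being $\tau$-stable it is the complexification of a nonzero $G$-submodule of the irreducible module $M_j$, which therefore equals all of $M_j$, whence $\dim_{\mathbb{R}}M_j\,\leq\,2$.

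Next I would normalise the $G$-action on each $M_j$. If $\dim M_j=1$, then $G$ acts on $M_j$ through $\mathrm{GL}(1,\mathbb{R})$, and since $G$ is connected the image lies in the positive scalars. If $\dim M_j=2$, then $W'\neq\tau(W')$ (otherwise $W'$ would descend to a one-dimensional real $G$-submodule of $M_j$), so $M_j\otimes\mathbb{C}=W'\oplus\tau(W')$; a short computation with the real vectors $w+\tau(w)$ and $\sqrt{-1}\,(w-\tau(w))$ (for $0\neq w\in W'$) shows that in a suitable real basis $G$ acts on $M_j$ through the standard embedding $\mathbb{C}^{\ast}\hookrightarrow\mathrm{GL}(2,\mathbb{R})$ obtained by letting $\mathbb{C}^{\ast}$ act on $\mathbb{R}^2\cong\mathbb{C}$ by multiplication, the image again being connected because $G$ is. Picking such a basis of each $M_j$, lifting to vectors of $\mathbb{R}^n$ and putting them together gives a real basis of $\mathbb{R}^n$ adapted to the flag $W_\bullet$; let $g\in\mathrm{GL}(n,\mathbb{R})$ be the change of basis to this new basis.

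Finally I would read off the conclusion. In the new basis $gGg^{-1}$ preserves the flag with consecutive diagonal blocks of sizes $d_1,\dots,d_k$, where $d_j=\dim M_j\in\{1,2\}$; thus $gGg^{-1}$ is contained in the standard parabolic $P\subset\mathrm{GL}(n,\mathbb{R})$ of block upper triangular matrices for this block decomposition. Since every diagonal block has size at most $2$, the simple roots occurring in the Levi of $P$ are pairwise orthogonal, so $P$ is a standard real parabolic given by mutually orthogonal roots. Its unipotent radical $N$ is the group of block strictly upper triangular matrices, and the connected subgroup $C$ of the Levi of $P$ having a positive-scalar $\mathrm{GL}(1,\mathbb{R})$-factor in each size-$1$ block and the above copy of $\mathbb{C}^{\ast}$ in each size-$2$ block is a Cartan subgroup of $P$: its complexification is conjugate to the full diagonal torus of $\mathrm{GL}(n,\mathbb{C})$, hence is a maximal algebraic torus. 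By the normalisation carried out above, the image of $gGg^{-1}$ under the projection of $P$ onto its Levi is contained in $C$; since that projection is split by the inclusion of the block-diagonal matrices, $gGg^{-1}$ is contained in the group $C\cdot N$, which is the block triangular subgroup described in the statement.

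I expect the two places requiring care to be the comparison of $\mathfrak{g}$- and $(\mathfrak{g}+\sqrt{-1}\cdot\mathfrak{g})$-stability of complex subspaces, together with the repeated use of connectedness of $G$ to pass to the identity components $\mathbb{R}_{>0}$ and $\mathbb{C}^{\ast}$, so that Lie's theorem really delivers pieces of the required shape; and the combinatorial identification of ``all Levi blocks of size at most $2$'' with ``standard parabolic given by mutually orthogonal roots''. The remaining steps are a routine assembly of these ingredients.
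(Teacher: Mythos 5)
Your proof is correct and follows essentially the same route as the paper: both rest on Lie's theorem over $\mathbb{C}$ together with the conjugation $\tau$ to produce real invariant pieces of dimension $1$ or $2$, normalise the action on these to positive scalars or the standard copy of $\mathbb{C}^{\ast}$ in $\mathrm{GL}(2,\mathbb{R})$, and then recognise the resulting block-triangular group as the parabolic given by mutually orthogonal roots with its Cartan subgroup. The only difference is organisational -- you bound the composition factors of $\mathbb{R}^n$ first and then build the basis, while the paper extracts an eigenvector, passes to the quotient and repeats -- and your final bookkeeping (exhibiting $C$ and the Levi projection explicitly) is if anything slightly more detailed than the paper's.
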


\begin{proof}
By Lie's theorem, there is a common eigenvector, say $v$, with $g(v)\,=\,
\chi(g)\cdot v$ for all $g\, \in\, G$. If $\chi(g)$ is real for all $g\,
\in\, G$, then we can find a real non-zero common eigenvector $v^{\prime }$.

If $\chi(g)\,\not=\, \overline{\chi(g)}$ for some $g$, then the vectors $v$
and $\overline{v}$ are linearly independent over $\mathbb{C}$ and their
complex span $\mathcal{V}$, being conjugation invariant has a real form,
given by the real and imaginary parts of $v$. The restriction of any $g$ to
this complex span $\mathcal{V}$ is of the form
\begin{equation*}
\begin{pmatrix}
x & y \\
-y & x
\end{pmatrix}
\, .
\end{equation*}

Let $\sigma$ denote the complex conjugation of ${\mathbb{C}}^n$. For a
complex subspace $W\, \subset\, {\mathbb{C}}^n$, by $W_\sigma$ we will
denote the space of invariants for the involution $\sigma$.

Replacing the $G$--module ${\mathbb{R}}^n$ by either ${\mathbb{R}}^n/
{\mathbb{R}}\cdot v^{\prime }$ or ${\mathbb{R}}^n/({\mathbb{C}}\cdot v+
{\mathbb{C}}\cdot \overline{v})_\sigma$ depending on the above two
alternatives, and repeating the argument, we see that the matrices in $G$
can be simultaneously conjugated by some real transformation to block
triangular form, where the diagonal entries are in the multiplicative group
of positive reals or in the group of real matrices
\begin{equation*}
\{
\begin{pmatrix}
x & y \\
-y & x
\end{pmatrix}
\, \mid\, x^2+y^2 \,\not=\, 0\}\, .
\end{equation*}
If the number of $2\times 2$ blocks is $k$, then this group has $2^k$
connected components. The connected component of it containing the identity
element is a real algebraic group and in fact a Cartan subgroup of
$\text{GL}(n, {\mathbb{R}})$. Block diagonal matrices with exactly $k$ many $2\times 2$
blocks give a standard parabolic subgroup of $\text{GL}(n, {\mathbb{C}})$
corresponding to $k$ orthogonal roots in the Dynkin diagram of type $A_{n-1}$.

The real points of the unipotent radical of such a parabolic group are
normalized by the Levi complement of the corresponding real parabolic group.
In particular, they are normalized by the Cartan subgroups of the parabolic
group.
\end{proof}

\begin{lemma}
\label{lem1} If a real semisimple element of $G^{\mathbb{C}}$ has all
positive eigenvalues, then the element is in $G$.
\end{lemma}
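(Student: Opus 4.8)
The plan is to place $g$ inside a torus that is defined over $\mathbb{R}$ and contained in $G^{\mathbb{C}}$, and then join $g$ to the identity element by a path of real points of $G^{\mathbb{C}}$; such a path must lie in $G$, since $G$ is the identity component of the set of real points of $G^{\mathbb{C}}$.

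Concretely, let $g$ denote the element in question, so $g\in G^{\mathbb{C}}$ is a real matrix, semisimple, with all eigenvalues positive real numbers, and let $A$ be the Zariski closure of the cyclic group generated by $g$. First I would record three easy facts: $A$ is a diagonalizable algebraic group (the powers of $g$ are simultaneously diagonalizable, so $A$ sits inside a maximal torus of $\mathrm{GL}(n,\mathbb{C})$); $A$ is defined over $\mathbb{R}$ (complex conjugation fixes $g$, hence the group it generates, hence the Zariski closure); and $A\subseteq G^{\mathbb{C}}$ (since $G^{\mathbb{C}}$ is Zariski closed). Write $S$ for the identity component of $A$, a torus over $\mathbb{R}$ contained in $G^{\mathbb{C}}$.

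The crux is to show $g\in S$. Since the group generated by $g$ is Zariski dense in $A$, its image in the finite group $A/S$ generates it, so $A/S$ is cyclic, generated by the class of $g$; call its order $d$. Because the inclusion $A\hookrightarrow\mathrm{GL}(n,\mathbb{C})$ is a faithful representation of the diagonalizable group $A$, the weights $\chi_{1},\dots,\chi_{n}$ occurring in $\mathbb{C}^{n}$ generate the character group of $A$. Hence every character $\chi$ of $A$ is an integral combination $\chi=\sum_{j}n_{j}\chi_{j}$, so $\chi(g)=\prod_{j}\chi_{j}(g)^{n_{j}}$ is a product of integer powers of eigenvalues of $g$, and therefore a positive real number. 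Applying this to a character that generates the (cyclic, order $d$) character group of $A/S$ — which sends $g$ to a primitive $d$-th root of unity — forces $d=1$. So $A=S$ and $g\in S$. This step is the main obstacle: one has to exclude a nontrivial component group of the Zariski closure of the cyclic group generated by $g$, and the only input that makes this work is that a positive real root of unity equals $1$.

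For the last step, the same "weights generate the character lattice" fact, applied now to $S$, shows that in any isomorphism $S\xrightarrow{\ \sim\ }(\mathbb{C}^{\times})^{r}$ the coordinates of $g$ are positive real numbers (each standard coordinate of $(\mathbb{C}^{\times})^{r}$ is an integral combination of the $\chi_{j}$, hence takes a positive real value at $g$). Consequently the real principal logarithm $X=\log g$ — well defined because $g$ is real, semisimple, with positive eigenvalues — lies in $\operatorname{Lie}(S)\cap\mathfrak{gl}(n,\mathbb{R})$, so the path $t\mapsto\exp(tX)$, $t\in[0,1]$, runs from the identity to $g$ inside $S(\mathbb{R})\subseteq G^{\mathbb{C}}\cap\mathrm{GL}(n,\mathbb{R})$. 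Since $G$ is the identity component of the real points of $G^{\mathbb{C}}$, this path lies in $G$, and in particular $g\in G$. The reality bookkeeping in this last step is routine once the character-lattice principle is in hand.
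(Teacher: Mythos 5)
Your proof is correct and follows essentially the same route as the paper: both pass to the Zariski closure of the cyclic group generated by $g$, use the fact that such a diagonalizable group is cut out by characters (monomial equations) whose values at $g$ are positive reals, and then run the real one-parameter subgroup $\{\exp(t\log g)\}$ inside $G^{\mathbb{C}}$ to land in the identity component of its real points, i.e.\ in $G$. Your extra step killing the component group $A/S$ via a character sending $g$ to a root of unity is a minor repackaging of the same positivity argument, which the paper bypasses by verifying the monomial equations directly on the one-parameter subgroup.
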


\begin{proof}
By conjugation with a real matrix, we may assume that this real semisimple
element is a diagonal matrix $g$. Let $H$ denote the Zariski closure in
$G^{\mathbb{C}}$ of the subgroup generated by this element $g$. Then there are
finitely many characters
\begin{equation*}
\chi_j (z_1\, ,\cdots\, ,z_n)\,=\, \prod_{i=1}^n z^{m_{i,j}}_i\, ,~\
1\,\leq\, j\, \leq\, N\, , \ m_{i,j}\, \in\, {\mathbb{Z}}\, ,
\end{equation*}
of the diagonal subgroup such that
\begin{equation*}
H\, =\, \{(z_1\, ,\cdots\, ,z_n)\,\mid\, \chi_j (z_1\, ,\cdots\, ,z_n)\,=\,
1\, , \ 1\,\leq\, j\, \leq\, N\}
\end{equation*}
(see \cite[p. 17, Proposition]{Bo}).

Thus if $g$ has all positive eigenvalues, then for any $1\,\leq\, j\, \leq\,
N$,
\begin{equation*}
\chi_j(g^{1/k})\,=\, 1
\end{equation*}
for all $k\, \in\, {\mathbb{Z}}$. Therefore,
\begin{equation*}
\{\exp(t\log (g))\,\mid\, t\,\in\, {\mathbb{R}}\}
\end{equation*}
is a real one-parameter subgroup in $G^{\mathbb{C}}$ and hence it is in $G$.
\end{proof}

\begin{lemma}
\label{lem2} Any real unipotent element $u$ of $G^{\mathbb{C}}$ is in $G$.
\end{lemma}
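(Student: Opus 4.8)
The plan is to reduce the statement to the assertion that $\log u$ lies in $\mathfrak{g}$, after which $u\,=\,\exp(\log u)$ automatically lies in $G$ because $G$, being connected with Lie algebra $\mathfrak{g}$, contains the one-parameter subgroup $\{\exp(t\log u)\,\mid\, t\,\in\,{\mathbb R}\}$. Since $u$ is unipotent, $N\,:=\,\log u$ is a nilpotent matrix obtained from $u$ by the terminating series $\sum_{k\geq 1}(-1)^{k+1}(u-I)^k/k$; as $u$ has real entries, so does $N$. Thus the only thing to prove is that the real nilpotent matrix $N$ belongs to $\mathfrak{g}$.

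First I would show $N\,\in\,\mathrm{Lie}(G^{\mathbb C})\,=\,\mathfrak{g}+\sqrt{-1}\cdot\mathfrak{g}$. The Zariski closure in $\mathrm{GL}(n,{\mathbb C})$ of the cyclic group $\langle u\rangle$ is the image of the morphism ${\mathbb C}\,\to\,\mathrm{GL}(n,{\mathbb C})$, $z\,\mapsto\,\exp(zN)$, which is a one-dimensional unipotent subgroup isomorphic to the additive group (the integers being Zariski dense in it, since a polynomial vanishing on ${\mathbb Z}$ vanishes identically). As $u\,\in\,G^{\mathbb C}$ and $G^{\mathbb C}$ is Zariski closed, this whole one-parameter subgroup lies in $G^{\mathbb C}$, whence $N\,\in\,\mathrm{Lie}(G^{\mathbb C})$.

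It remains to use the reality of $N$. Writing $N\,=\,X+\sqrt{-1}\cdot Y$ with $X,Y\,\in\,\mathfrak{g}$ (possible since $N\,\in\,\mathfrak{g}+\sqrt{-1}\cdot\mathfrak{g}$), and applying the complex conjugation $\sigma$ of ${\mathbb C}^n$, which fixes $\mathfrak{g}$ pointwise and acts by $-1$ on $\sqrt{-1}\cdot\mathfrak{g}$, we get $\sigma(N)\,=\,X-\sqrt{-1}\cdot Y$. But $N$ is real, so $\sigma(N)\,=\,N$, forcing $Y\,=\,0$ and hence $N\,=\,X\,\in\,\mathfrak{g}$, as required.

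The main obstacle is the middle step: one must know that $G^{\mathbb C}$ contains not merely $u$ but the full complex one-parameter subgroup through $u$, and for this it is essential that $G^{\mathbb C}$ is genuinely an algebraic (Zariski closed) subgroup of $\mathrm{GL}(n,{\mathbb C})$, as in the definition adopted in this paper, together with the description of the Zariski closure of a unipotent cyclic group. The final reality argument is routine linear algebra, and parallels the concluding step in the proof of Lemma \ref{lem1}.
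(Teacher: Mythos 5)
Your proof is correct and rests on the same key step as the paper's: the Zariski closure of the cyclic group $\langle u\rangle$ is the complex one-parameter group $\{\exp(z\log u)\,\mid\, z\in{\mathbb C}\}$, which therefore lies in the algebraic group $G^{\mathbb C}$. The only (harmless) difference is at the end: you descend to the Lie algebra and use the reality of $\log u$ to get $\log u\in\mathfrak{g}$, whereas the paper simply observes that the real curve $t\mapsto\exp(t\log u)$ connects $u$ to the identity inside the real points of $G^{\mathbb C}$, whose identity component is $G$.
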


\begin{proof}
The group $\{\exp(z\log (u))\,\mid\, z\,\in\, {\mathbb{C}}\}$ is the Zariski
closure in $G^{\mathbb{C}}$ of the group generated by $u$. Therefore, $u$
lies in the connected component, containing the identity element, of the
real points of $G^{\mathbb{C}}$. In other words, $g\, \in\, G$.
\end{proof}

\begin{corollary}
\label{cor1} If $g\, \in\, G^{\mathbb{C}}$ is real and has positive
eigenvalues, then $g$ is in $G$.
\end{corollary}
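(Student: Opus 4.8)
The plan is to deduce this from the two preceding lemmas by means of the Jordan decomposition. Write $g\,=\,g_s g_u$ for the multiplicative Jordan decomposition of $g$ in $\mathrm{GL}(n,{\mathbb{C}})$, where $g_s$ is semisimple, $g_u$ is unipotent, and the two factors commute. Since $G^{\mathbb{C}}$ is an algebraic subgroup of $\mathrm{GL}(n,{\mathbb{C}})$, the Jordan components $g_s$ and $g_u$ again lie in $G^{\mathbb{C}}$ (this is a standard property of linear algebraic groups; see \cite{Bo}).

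First I would check that $g_s$ and $g_u$ are themselves real. Let $\sigma$ denote complex conjugation on ${\mathbb{C}}^n$, regarded as an automorphism of $\mathrm{GL}(n,{\mathbb{C}})$. It carries semisimple elements to semisimple elements (the eigenvalues are merely conjugated) and unipotent elements to unipotent elements, and it preserves the property of commuting; hence $\sigma(g)\,=\,\sigma(g_s)\,\sigma(g_u)$ is again a Jordan decomposition. As $g$ is real we have $\sigma(g)\,=\,g$, so the uniqueness of the Jordan decomposition forces $\sigma(g_s)\,=\,g_s$ and $\sigma(g_u)\,=\,g_u$. Thus $g_s$ is a real semisimple element of $G^{\mathbb{C}}$ and $g_u$ is a real unipotent element of $G^{\mathbb{C}}$.

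Next, since $g_u$ is unipotent, $g$ and $g_s$ have the same eigenvalues, so $g_s$ has all positive eigenvalues. Now Lemma \ref{lem1} applies to $g_s$ and Lemma \ref{lem2} applies to $g_u$, yielding $g_s\,\in\, G$ and $g_u\,\in\, G$; since $G$ is a group, $g\,=\,g_s g_u\,\in\, G$, which is the assertion.

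The only step that is not purely formal is the reality of the Jordan components, and even that reduces to the standard compatibility of the Jordan decomposition with passage to an algebraic subgroup and with the conjugation automorphism. So I do not anticipate a serious obstacle: the substance of the corollary is simply that the two lemmas treat the semisimple and unipotent pieces separately, and the Jordan decomposition glues them back together.
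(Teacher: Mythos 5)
Your argument is correct and is essentially the paper's own proof, just spelled out in more detail: the paper likewise takes the Jordan decomposition of $g$, notes that the semisimple and unipotent parts lie in $G^{\mathbb{C}}$ and are real, and applies Lemma \ref{lem1} and Lemma \ref{lem2} to conclude $g\in G$. Your extra checks (reality of the components via uniqueness of the Jordan decomposition under conjugation, and that $g_s$ inherits the positive eigenvalues) are exactly the details the paper leaves implicit.
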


\begin{proof}
The semisimple and unipotent parts of $g$ are in $G^{\mathbb{C}}$ and are
real. Therefore, by Lemma \ref{lem1} and Lemma \ref{lem2}, they are in $G$.
Hence $g$ is in $G$.
\end{proof}

\begin{corollary}
\label{cor2} The real Zaraski closure of any subgroup of $G^{\mathbb{C}}$
generated by real elements with positive eigenvalues is connected and is
contained in $G$.
\end{corollary}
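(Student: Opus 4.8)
The plan is to realize the subgroup in question as being generated by connected one-parameter subgroups that lie in $G$, conclude from this that it is connected in the Euclidean topology, and then transfer connectedness to its Zariski closure.

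Concretely, write $S$ for the given collection of real elements of $G^{\mathbb C}$ with positive eigenvalues and $\Gamma\subseteq G^{\mathbb C}$ for the subgroup they generate. Corollary \ref{cor1} gives $S\subseteq G$, hence $\Gamma\subseteq G$. The first step is to check that each $g\in S$ lies on a connected one-parameter subgroup of $G$ through the identity. Taking the Jordan decomposition $g=g_sg_u$ in $\mathrm{GL}(n,{\mathbb C})$, uniqueness forces $g_s$ and $g_u$ to be real; $g_s$ is semisimple with positive eigenvalues, so it is diagonalizable over $\mathbb R$ and has a real logarithm $\log g_s$, while $g_u$ is unipotent with real nilpotent logarithm $\log g_u$, the two logarithms commuting. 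With $X:=\log g_s+\log g_u$ one has $\exp X=g$ and $\exp(tX)=\exp(t\log g_s)\exp(t\log g_u)$; the arguments in the proofs of Lemma \ref{lem1} and Lemma \ref{lem2} show that each of these two factors lies in $G$, so $\{\exp(tX)\mid t\in{\mathbb R}\}$ is a connected subgroup of $G$ through the identity containing $g$. Consequently $\Gamma$ is a union of finite products of such one-parameter subgroups; each such product is a continuous image of a product of connected sets and contains the identity, so $\Gamma$ is connected in the Euclidean topology.

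Now let $A$ be the Zariski closure of $\Gamma$ in $G^{\mathbb C}$; since $\Gamma\subseteq G^{\mathbb C}$ and $G^{\mathbb C}$ is already Zariski closed, $A\subseteq G^{\mathbb C}$, and since $\Gamma$ consists of real matrices it is stable under the complex conjugation $\sigma$, hence so is $A$. The identity component $A^{\circ}$ is Zariski-open and Zariski-closed in $A$, hence also open and closed for the Euclidean topology; thus $\Gamma\cap A^{\circ}$ is open and closed in $\Gamma$ and contains the identity, so connectedness of $\Gamma$ forces $\Gamma\subseteq A^{\circ}$, and therefore $A=A^{\circ}$. So $A$ is a connected, $\sigma$-stable — that is, real — algebraic subgroup of $G^{\mathbb C}$, and its set of real points lies in $G^{\mathbb C}({\mathbb R})$. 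The identity component of that set of real points is Zariski dense in $A$, is the smallest real algebraic subgroup containing $\Gamma$, i.e. the real Zariski closure of $\Gamma$, and, being connected and containing the identity, it lies in $G^{\mathbb C}({\mathbb R})^{\circ}=G$. Hence the real Zariski closure of $\Gamma$ is connected and contained in $G$.

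The main obstacle is the passage from $\Gamma$ being connected in the Euclidean topology to its Zariski closure being connected; this is precisely where one must invoke that the components of an algebraic group are clopen for both the Zariski and the Euclidean topologies. A subsidiary point that should not be glossed over is the production, for each generator, of an honest real one-parameter subgroup inside $G$: this is where positivity of the eigenvalues is indispensable (through Lemma \ref{lem1}), since it simultaneously guarantees that the generator lies in $G$ and that a real logarithm of its semisimple part exists.
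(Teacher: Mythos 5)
Your core argument is the paper's: containment in $G$ comes from Corollary \ref{cor1}, and connectedness of the generated group $\Gamma$ comes from putting each generator $g=g_sg_u$ on the real one-parameter subgroup through $\log g_s+\log g_u$, which lies in $G$ by the arguments of Lemmas \ref{lem1} and \ref{lem2} --- this is precisely the observation recorded in the paper's proof. You then go beyond the paper in making explicit the passage to the Zariski closure $A$ of $\Gamma$: the clopen identity-component argument giving $A=A^{\circ}$ is correct, as is the conclusion $A(\mathbb{R})^{\circ}\subseteq G$.

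The step that does not hold up literally is your last identification: you declare the real Zariski closure of $\Gamma$ to be the identity component $A(\mathbb{R})^{\circ}$, whereas the paper defines the real Zariski closure of a subset to be the \emph{full} set of real points of its Zariski closure, i.e.\ $A(\mathbb{R})$; and connectedness of $A$ does not give connectedness of $A(\mathbb{R})$. Indeed, for $G=\{x>0\}\subset\mathrm{GL}(1,\mathbb{R})$ and $\Gamma=G$ one has $A=\mathbb{C}^{*}$, whose real points $\mathbb{R}^{*}$ are disconnected and not contained in $G$, so with the definition as written the statement itself fails and no argument can close this gap; what you have actually proved is the assertion for $\Gamma$ and for $A(\mathbb{R})^{\circ}$. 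In fairness, the paper's own terse proof establishes no more than this (it only treats the generated group), and the identity-component reading is exactly what is used later (e.g.\ for the stabilizer $G_p$ in Proposition \ref{prop1}, where intersecting with the Zariski-closed stabilizer keeps the one-parameter subgroups inside $G_p$). So: same approach as the paper, with more care about the closure, but you should state explicitly that you are proving connectedness of $\Gamma$ and of the identity component of the real points, rather than silently substituting that component for the real Zariski closure as the paper defines it.
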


\begin{proof}
{}From \ref{cor1} it follows that the above generated group is contained in
$G$. To prove that it is connected, observe that if $g\,=\, su$ is the
decomposition of $g$ into its semisimple and unipotent parts, then the one
parameter subgroup generated by $\log(s) +\log(u)$ is in $G$ and $g$ is in
this one parameter subgroup. Now connectedness follows from Lemma
\ref{lem1}, Lemma \ref{lem2} and this observation.
\end{proof}

The following proposition will be used in the proof of Proposition
\ref{prop2}.

\begin{proposition}
\label{prop-a} If $s\, \in\, G$ is semisimple, and $H$ is the real Zariski
closure of the group generated by $s$, then $H$ has only finitely many
connected components.
\end{proposition}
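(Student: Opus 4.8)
The plan is to reduce the statement, as quickly as possible, to the fact that the group of real points of a torus defined over $\mathbb{R}$ has only finitely many connected components.

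Write $T$ for the Zariski closure of the cyclic group $\langle s\rangle$ inside $G^{\mathbb{C}}$; by the definition of real Zariski closure, $H$ is then the group of real points $T\cap\mathrm{GL}(n,\mathbb{R})$. Since $s$ is semisimple it lies in some maximal torus $D$ of $\mathrm{GL}(n,\mathbb{C})$, and hence so does $T$; thus $T$ is a diagonalizable algebraic group, its identity component $T^{0}$ is an algebraic torus, and $T$ is the union of the finitely many cosets of $T^{0}$ (indeed $T/T^{0}$ is cyclic, being generated by the image of $s$). Because $s$ is a real matrix, $\langle s\rangle$ is stable under the complex conjugation $\sigma$ of $\mathrm{GL}(n,\mathbb{C})$, hence so are $T$ and $T^{0}$; as $\mathrm{Gal}(\mathbb{C}/\mathbb{R})$ is generated by $\sigma$, the groups $T$ and $T^{0}$ are defined over $\mathbb{R}$.

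First I would dispose of the non-identity cosets: the subgroup $T^{0}\cap\mathrm{GL}(n,\mathbb{R})$ is open and closed in $H$, and $H/(T^{0}\cap\mathrm{GL}(n,\mathbb{R}))$ injects into the finite group $T/T^{0}$, so $H$ is a finite union of translates of $T^{0}(\mathbb{R}):=T^{0}\cap\mathrm{GL}(n,\mathbb{R})$, each homeomorphic to $T^{0}(\mathbb{R})$. It therefore suffices to show that the real torus $T^{0}(\mathbb{R})$ has finitely many connected components, and this is the one genuinely substantive point. For it I would invoke the classification of tori over $\mathbb{R}$ (see \cite{OV}): $T^{0}$ is $\mathbb{R}$-isomorphic to a product of copies of $\mathbb{G}_{m}$, of the norm-one torus $\{x^{2}+y^{2}=1\}$, and of $\mathrm{R}_{\mathbb{C}/\mathbb{R}}\mathbb{G}_{m}$, so that $T^{0}(\mathbb{R})$ is, as a Lie group, a product of copies of $\mathbb{R}^{\times}$, of the circle, and of $\mathbb{C}^{\times}$; hence it has exactly $2^{a}$ connected components, where $a$ is the number of split factors.

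The main difficulty here is organizational rather than deep: one must check carefully that $H$ really is the group of real points of an $\mathbb{R}$-group, so that the reduction to $T^{0}$ and the final component count are legitimate. If one prefers to stay closer to the tools already developed, the torus classification can be avoided: using the real Jordan form one factors $s\,=\,pe\,=\,ep$ with $p$ having all eigenvalues positive and $e$ having all eigenvalues of absolute value $1$; running the argument of Lemma \ref{lem1} with the character equations of \cite{Bo} shows $p,e\in T$, so by Corollary \ref{cor2} the real Zariski closure of $\langle p\rangle$ is a connected subgroup of $G$ while the Euclidean closure of $\langle e\rangle$ is a compact Lie group, and since $T$ equals the product of the Zariski closures of $\langle p\rangle$ and $\langle e\rangle$ a brief check of real points again gives that $H$ has only finitely many connected components. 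One may also simply note that the assertion is a special case of the fact that the real points of a real algebraic variety fall into finitely many connected components.
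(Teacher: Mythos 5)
Your argument is correct, but it runs along a different track from the paper's. You analyze the complex Zariski closure $T$ of $\langle s\rangle$ as a diagonalizable group defined over $\mathbb{R}$ (conjugation-stability plus Galois descent), split off the finite component group $T/T^{0}$, and then settle the identity component by the classification of real forms of tori (split, norm-one, and Weil-restriction factors), which even yields the exact count $2^{a}$ of components of $T^{0}(\mathbb{R})$; your closing remark that the statement also follows from the general finiteness of connected components of the real points of a real variety is likewise valid, at the cost of invoking that nontrivial general theorem. The paper instead stays inside its own toolkit: it uses Proposition \ref{thm1} and Lemma \ref{lem1} to see that, modulo the identity component $H_{0}$ (which absorbs the positive-eigenvalue part of $s$), the element $s$ lies in the compact part of $H$, and then uses that for a compact abelian group the real Zariski closure coincides with the topological closure, so $H/H_{0}$ is finite; your alternative sketch via the commuting factorization $s=pe$, with $p$ of positive eigenvalues handled by Corollary \ref{cor2} and $e$ generating a relatively compact group, is essentially this argument and would be the version closest to the text (though as written its final ``brief check of real points'' is the step that would need to be spelled out, since real points do not automatically respect the product decomposition of $T$). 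In short: your main proof is complete and more structural, trading the paper's elementary matrix-level reduction for standard facts about diagonalizable groups and real tori; the paper's proof is shorter given the lemmas it has already established.
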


\begin{proof}
Let $H_0\, \subset\, H$ be the connected component containing the identity
element. The Zariski closure of the subgroup generated by $s$ is defined by
monomial equations. Using Proposition \ref{thm1} and Lemma \ref{lem1}, we
see that if $s\, \in\, G$ is semisimple, then, modulo $H_0$, it is in the
compact part of $H$. For a compact abelian group, the real Zariski closure
coincides with the topological closure. Thus $H/H_0$ is a finite group.
\end{proof}

\begin{proposition}
\label{prop1} Let $G$ be a solvable real algebraic group whose eigenvalues
are all real. If $G^{\mathbb{C}}$ operates algebraically on a complex
variety $V$, and some $G$ orbit is compact, then this orbit is a point.
\end{proposition}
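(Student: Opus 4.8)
The plan is to reduce everything to an elementary statement about \emph{linear} actions. Since $G$ is connected and solvable with all eigenvalues real, those eigenvalues are in fact positive, and Proposition \ref{thm1} (the argument there produces no $2\times 2$ blocks in the absence of non-real eigenvalues) lets me assume, after a real conjugation, that $G$ lies in the group of upper triangular matrices with positive diagonal entries. I would then record the consequence that for \emph{any} finite dimensional representation $\rho\colon G^{\mathbb{C}}\to \mathrm{GL}(W)$ and any $g\in G$, all eigenvalues of $\rho(g)$ are real and positive: writing the Jordan decomposition $g=su$ (which lies in $G^{\mathbb{C}}$ and maps to the Jordan decomposition of $\rho(g)$), the element $s$, being conjugate into the positive diagonal torus, has positive eigenvalues, and the eigenvalues of $\rho(s)$ are then also positive, being obtained from those of $s$ by evaluating algebraic characters of a torus of $G^{\mathbb{C}}$ containing $s$, while $\rho(u)$ is unipotent. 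In particular $G$ acts on every one dimensional subquotient of $W$ through a character $G\to{\mathbb{R}}_{>0}$; also every continuous homomorphism $G\to({\mathbb{C}},+)$ has image a real vector subspace, and every continuous homomorphism $G\to{\mathbb{R}}_{>0}$ is trivial or surjective, since $G$ is connected.

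The key lemma I would prove is: \emph{if $G$ acts algebraically and affinely on ${\mathbb{C}}^m$ (i.e.\ via an algebraic affine action of $G^{\mathbb{C}}$) and the $G$-orbit of a point $p$ is compact, then this orbit is a single point.} The proof is by induction on $m$. For $m=1$ the linear part of the action is a character $\eta\colon G\to{\mathbb{R}}_{>0}$. If $\eta\equiv 1$, the action is by translations through a homomorphism $G\to({\mathbb{C}},+)$, whose image is a vector subspace, forced to be $\{0\}$ by compactness. If $\eta$ is nontrivial, choose $g\in G$ with $r:=\eta(g)\neq 1$; then $g$ acts on ${\mathbb{C}}$ by an affine map with a unique fixed point $q$, and $|g^{k}\cdot p-q|=r^{k}\,|p-q|$ is unbounded over $k\in{\mathbb{Z}}$ unless $p=q$, so compactness of the orbit gives $p=q$, i.e.\ $g$ fixes $p$. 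As this holds for every $g$ with $\eta(g)\neq 1$, and for $g$ with $\eta(g)=1$ one has $h\cdot(g\cdot p)=(hg)\cdot p=p=h\cdot p$ for any $h$ with $\eta(h)\neq 1$ (and $h\cdot$ is injective), every $g\in G$ fixes $p$. For $m\geq 2$, Lie's theorem gives a line $L\subset{\mathbb{C}}^m$ invariant under the linear part (hence $G^{\mathbb{C}}$-invariant); the induced affine $G$-action on ${\mathbb{C}}^m/L$ has the compact orbit of the image of $p$, which by induction is a point, so $g\cdot p-p\in L$ for all $g\in G$; thus the orbit of $p$ lies in the affine line $p+L$, on which $G$ acts affinely with linear part a character into ${\mathbb{R}}_{>0}$, and the case $m=1$ finishes.

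Finally I would deduce the proposition. Let ${\mathcal{O}}^{\mathbb{C}}=G^{\mathbb{C}}\cdot x\subset V$; it is a locally closed subvariety, $G^{\mathbb{C}}$-equivariantly isomorphic to $G^{\mathbb{C}}/G^{\mathbb{C}}_x$, containing the given compact orbit ${\mathcal{O}}=G\cdot x$. By Chevalley's theorem $G^{\mathbb{C}}_x$ is the stabilizer of a line ${\mathbb{C}}\lambda$ in some representation $\rho\colon G^{\mathbb{C}}\to\mathrm{GL}(W)$, so $gx\mapsto[\rho(g)\lambda]$ identifies ${\mathcal{O}}^{\mathbb{C}}$ $G^{\mathbb{C}}$-equivariantly with $G^{\mathbb{C}}\cdot[\lambda]\subset{\mathbb{P}}(W)$; being the image of the compact set ${\mathcal{O}}$ under a morphism, $G\cdot[\lambda]$ is compact. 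Choose a $G$-invariant full flag $W=W_0\supset W_1\supset\cdots\supset W_N=0$ and let $j$ be largest with $\lambda\in W_j$; then $\lambda\notin W_{j+1}$, the submodule $W_j$ contains $G^{\mathbb{C}}\cdot\lambda$, and $G$ acts on $W_j/W_{j+1}$ through a character $\chi\colon G\to{\mathbb{R}}_{>0}$. Pick a functional $\ell$ on $W_j$ with $\ker\ell=W_{j+1}$, so $\ell(\lambda)\neq 0$ and $\ell(\rho(g)v)=\chi(g)\ell(v)$ for $v\in W_j$. Then the twisted linear action $g*v:=\chi(g)^{-1}\rho(g)v$ of $G$ on $W_j$ — an algebraic linear action of $G^{\mathbb{C}}$ — keeps $\ell$ constant along the orbit $\{g*\lambda\}$, which therefore lies in the affine hyperplane $\{\,v\in W_j : \ell(v)=\ell(\lambda)\,\}$; on this hyperplane $v\mapsto[v]$ is a homeomorphism onto an affine chart of ${\mathbb{P}}(W_j)$ carrying $\{g*\lambda\}$ onto $G\cdot[\lambda]$, so $\{g*\lambda\}$ is compact. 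By the lemma, $g*\lambda=\lambda$ for all $g\in G$, i.e.\ $\rho(g)\lambda\in{\mathbb{C}}\lambda$, so $[\lambda]$ is $G$-fixed, and hence ${\mathcal{O}}=G\cdot x$ is a point.

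The main obstacle is the lemma, and inside it the case $m=1$: this is exactly where the hypothesis that the eigenvalues are real (hence positive), rather than on the unit circle, is indispensable — an ${\mathbb{R}}_{>0}$-translation orbit, or a homothety orbit with ratio $\neq 1$, on the line is forced to be unbounded, whereas an $S^1$-action could have a circle as an orbit. Arranging the last reduction cleanly — using Chevalley on the stabilizer to land in a projective space and then ``untwisting'' the scaling character $\chi$ to turn the projective orbit into a genuine compact orbit of a linear action — is the other point needing care, though it is routine.
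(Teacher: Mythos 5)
Your proof is correct, but it takes a genuinely different route from the paper's. The paper argues topologically: it first shows the stabilizer $G_p$ is connected (via Corollary \ref{cor2}), then uses a filtration of $G$ by algebraic subgroups with one--dimensional successive quotients to show that $G$, and the fibre $G_p/U$ of the fibration $G/U\,\longrightarrow\, G/G_p$, are cells; the homotopy exact sequence and the Hurewicz theorem, together with orientability of $G/G_p$ and the nonvanishing of the top homology of a compact orientable manifold, then force $\dim G/G_p\,=\,0$. You instead linearize: Chevalley's theorem places the complex orbit in ${\mathbb P}(W)$, untwisting by the character on $W_j/W_{j+1}$ turns the compact projective $G$--orbit into a compact orbit of an affine action, and an induction via Lie's theorem reduces everything to the one--variable dynamics $|g^k\cdot p-q|\,=\,r^k|p-q|$, where the hypothesis of real (hence positive) eigenvalues enters exactly as the unboundedness of $r^k$ over $k\in{\mathbb Z}$. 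Your approach is more elementary (no homotopy theory, only Chevalley, Lie, and explicit affine maps) and isolates where positivity of eigenvalues is indispensable; the paper's approach establishes along the way structural facts (connectedness of the stabilizer, $G$ a cell) of independent use. Two small points should be made airtight: the positivity of the eigenvalues of $\rho(s)$ follows because any character of a torus of $\mathrm{GL}(n,{\mathbb C})$ containing $s$ extends to a monomial character of the diagonal torus, hence takes positive values on an element with positive spectrum -- your phrase ``evaluating algebraic characters of a torus containing $s$'' needs this extension step; and in the final step of your induction the affine line $p+L$ is preserved only by $G$, not by $G^{\mathbb C}$, so the one--dimensional case should be formulated for a $G$--action whose linear part is a character into ${\mathbb R}_{>0}$, which is all your argument actually uses (note also that, $G$ being connected, any $G$--invariant complex subspace of $W$ is automatically $G^{\mathbb C}$--invariant, so your flag and the character $\chi$ do extend to $G^{\mathbb C}$ as you implicitly assume).
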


\begin{proof}
The group $G$ can be embedded into the group of real upper triangular
matrices and as $G$ is connected, it is in the connected component
containing the identity element of the group of real upper triangular
matrices. Thus all eigenvalues of the elements of $G$ are positive.

Take a point $p\, \in\, V$. The stabilizer in $G^{\mathbb{C}}$ for $p$ is an
algebraic group. Let
\begin{equation*}
G_p\, \subset\, G
\end{equation*}
be the stabilizer of $p$ for the action of $G$. Since elements of $G_p$ have
all positive eigenvalues, the subgroup $G_p$ is connected by Corollary \ref{cor2}.

Let $T$ denote the connected component containing the identity element of
the group of real upper triangular matrices. It has a filtration of real
algebraic subgroups
\begin{equation*}
T\,=\, T_0\, \supset\, T_1\, \supset\, T_2 \, \supset\,\cdots \, \supset\,
T_{\ell-1} \, \supset\, T_\ell\,=\, 1\, ,
\end{equation*}
where $T_{i+1}\, \subset\, T_i$ is a normal subgroup of codimension one. Let
$G_i\,:=\, G\bigcap T_i$ be the intersection. So we have a filtration of
algebraic subgroups
\begin{equation*}
G\,=\, G_0\, \supset\, G_1\, \supset\, G_2 \, \supset\,\cdots \, \supset\,
G_{\ell-1} \, \supset\, G_\ell\,=\, 1\, ,
\end{equation*}
such that each successive quotient is a group of dimension at most one.

The Lie algebra ${\mathfrak{g}}\,=\, \text{Lie}(G)$ is algebraic in the
sense that the semisimple and nilpotent parts of any element of
${\mathfrak{g}}$ also lie in ${\mathfrak{g}}$. Recall that $\dim G_i/G_{i+1}\,
\leq\, 1$. Hence any nonzero element $X$ in the Lie algebra of $G_i/G_{i+1}$ is
either semisimple or nilpotent. Thus as
\begin{equation*}
G_i\,=\, \{\exp(tX)\,\mid\, t\,\in\,{\mathbb{R}}\}G_{i+1}\, ,
\end{equation*}
using arguments similar to those in Lemma \ref{lem1} and Lemma \ref{lem2} we
see that
\begin{equation*}
\{\exp(tX)\,\mid\, t\,\in\,{\mathbb{R}}\}\cap G_{i+1}\,=\, 1\, .
\end{equation*}
Therefore,
\begin{equation*}
G_i\,\cong\, \{\exp(tX)\,\mid\, t\,\in\,{\mathbb{R}}\}\times G_{i+1}\, .
\end{equation*}

Thus, topologically $G$ is a cell. Here we have used that if $H$ and $K$ are
Lie subgroups of $G$ with $H\bigcap K\,=\,1$, and $HK$ is a closed subgroup
of $G$, then $HK$ is homeomorphic to $H\times K$. This implies that $H$ and
$K$ are closed in the Euclidean topology of $G$.

Let $U$ be the group of unipotents in $G_{p}$ (recall that $G_{p}\,\subset
\,G$ is the stabilizer of the point $p\,\in \,V$). Then
\begin{equation*}
G/U\,\longrightarrow \,G/G_{p}
\end{equation*}
is a fibration with total space a cell and the fiber $G_{p}/U$ is also a
cell. Now one can argue as on p 450 of Hilgert-Neeb \cite[p. 885,
Proposition 2.2]{HN}, cases 1 \& 2, to conclude that $G/G_{p}$ is a cell
and-as the orbit is compact-it must be a point. Or one can use the following
topological argument to show that if the base $G/G_{p}$ is compact, then
$G/G_{p}$ is a point. To prove this consider the long exact sequence of
homotopies for the fibration $G/U\,\longrightarrow \,G/G_{p}$. Since $G/U$
and the fiber $G_{p}/U$ are contractible, the exact sequence gives that $\pi
_{i}(G/G_{p})\,=\,0$ for all $i\,\geq \,1$. Therefore,
\begin{equation*}
H_{i}(G/G_{p},\,{\mathbb{Z}})\,=\,0
\end{equation*}
for all $i\,\geq \,1$ by the Hurewicz theorem. Since $G_{p}$ is connected,
the adjoint action of $G_{p}$ on $\text{Lie}(G)/\text{Lie}(G_{p})$ preserves
its orientation. Hence $G/G_{p}$ is orientable. We have
\begin{equation*}
H_{\dim G/G_{p}}(G/G_{p},\,{\mathbb{Z}})\,=\,\mathbb{Z}
\end{equation*}
because $G/G_{p}$ is a compact orientable manifold. Hence $\dim G/G_{p}\,=\,0
$, implying that $G/G_{p}$ is a point.
\end{proof}

A result due to Mostow \cite{Mo} and Vinberg \cite{Vi} says the following:
Any connected solvable subgroup of $G$ with all real eigenvalues is
conjugate to a subgroup of $AN$, where $G\,=\, KAN$ is an Iwasawa
decomposition of $G$. Following a suggestion of Vinberg, a proof of it is
given in \cite[p. 885, Proposition 2.2]{AB}. We note that this result can
also be proved using Proposition \ref{prop1}.

\begin{proposition}
\label{prop2} Let $L$ be an algebraic subgroup of $G$ such that the
complexification $L^{\mathbb{C}}$ contains a maximal torus of $G^{\mathbb{C}}
$. Then $L$ contains a Cartan subgroup of $G$.
\end{proposition}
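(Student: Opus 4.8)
The plan is to exhibit a maximal torus of $G^{\mathbb{C}}$ that is both contained in $L^{\mathbb{C}}$ and invariant under complex conjugation, and then to take its real form. Write $\sigma$ for the complex conjugation of ${\mathbb{C}}^{n}$, as in the proof of Proposition \ref{thm1}, and suppose $\mathcal{T}\,\subset\, L^{\mathbb{C}}$ is such a torus. Then $\mathrm{Lie}(\mathcal{T})$ is a $\sigma$-stable complex subspace, so $\mathrm{Lie}(\mathcal{T})\,=\, {\mathfrak{t}}\oplus \sqrt{-1}\cdot{\mathfrak{t}}$ where ${\mathfrak{t}}\,:=\, \mathrm{Lie}(\mathcal{T})\cap {\mathfrak{g}}$, and the connected subgroup $C\,\subset\, \mathrm{GL}(n,{\mathbb{R}})$ with Lie algebra ${\mathfrak{t}}$, equivalently the connected component of $\mathcal{T}\cap \mathrm{GL}(n,{\mathbb{R}})$ containing the identity, has complexification, in the sense used in this paper, equal to $\mathcal{T}$. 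Since $C$ is connected and lies in the set of real points of $L^{\mathbb{C}}$, it is contained in $L$; and as $\mathcal{T}$ will be a maximal torus of $G^{\mathbb{C}}$, the group $C$ is by definition a Cartan subgroup of $G$. Hence everything reduces to producing $\mathcal{T}$. One could also realize $C$ as the identity component of the real Zariski closure in $G^{\mathbb{C}}$ of a cyclic subgroup of $\mathcal{T}\cap \mathrm{GL}(n,{\mathbb{R}})$ generating a Zariski dense subgroup of $\mathcal{T}$, invoking Proposition \ref{prop-a} to see that this closure has finitely many components.

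Next I would record a rank equality. From $L\,\subset\, G$ one gets $L^{\mathbb{C}}\,\subset\, G^{\mathbb{C}}$, and $L^{\mathbb{C}}$ is conjugation invariant, being the complexification of the real algebraic group $L$. The given maximal torus $T$ of $G^{\mathbb{C}}$ lies in $L^{\mathbb{C}}$; any torus of $L^{\mathbb{C}}$ properly containing $T$ would be a torus of $G^{\mathbb{C}}$ properly containing $T$, so $T$ is a maximal torus of $L^{\mathbb{C}}$. All maximal tori of the connected group $L^{\mathbb{C}}$ are conjugate, hence of dimension $\dim T$, which is the rank of $G^{\mathbb{C}}$. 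Therefore every maximal torus of $L^{\mathbb{C}}$ is a torus of $G^{\mathbb{C}}$ of dimension equal to the rank of $G^{\mathbb{C}}$, and so is a maximal torus of $G^{\mathbb{C}}$.

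Now I would invoke the standard fact that a connected linear algebraic group defined over ${\mathbb{R}}$ contains a maximal torus defined over ${\mathbb{R}}$ (see \cite{Bo}). Applied to $L^{\mathbb{C}}$ it yields a conjugation invariant maximal torus $\mathcal{T}$ of $L^{\mathbb{C}}$, which by the previous paragraph is a maximal torus of $G^{\mathbb{C}}$. Feeding this $\mathcal{T}$ into the construction of the first paragraph completes the proof.

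The one step that is not purely formal is the existence of the conjugation invariant maximal torus $\mathcal{T}$ of $L^{\mathbb{C}}$, and that is where I expect any real difficulty to lie. Beyond citing \cite{Bo}, one can argue directly: $T$ and its conjugate $\sigma(T)$ are both maximal tori of $L^{\mathbb{C}}$, so $\sigma(T)\,=\, gTg^{-1}$ for some $g\,\in\, L^{\mathbb{C}}$, and applying $\sigma$ gives $\sigma(g)g\,\in\, N_{L^{\mathbb{C}}}(T)$; a Galois descent argument, using that the connected linear algebraic group $L^{\mathbb{C}}$ is unirational over ${\mathbb{R}}$ so that its variety of maximal tori has a real point, then provides $h\,\in\, L^{\mathbb{C}}$ with $hTh^{-1}$ conjugation invariant, and one sets $\mathcal{T}\,=\, hTh^{-1}$. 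In either case the remaining verifications, namely that $C^{\mathbb{C}}\,=\,\mathcal{T}$ and that $C\,\subset\, L$, are routine, using only the definition of complexification adopted here.
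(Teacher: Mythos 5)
Your proof is correct, but it takes a genuinely different route from the paper. You reduce everything to the classical rationality theorem that a connected linear algebraic group defined over a perfect field (here ${\mathbb{R}}$; note $L^{\mathbb{C}}$ is $\sigma$-stable, hence defined over ${\mathbb{R}}$ in characteristic zero) contains a maximal torus defined over that field; combined with your rank observation that maximal tori of $L^{\mathbb{C}}$ are already maximal in $G^{\mathbb{C}}$, and with taking the identity component of the real points, this immediately yields a Cartan subgroup of $G$ inside $L$. The paper instead argues internally: it picks a maximal connected abelian subgroup $S$ of $L$ consisting of semisimple elements, reduces to $G$ reductive, uses Proposition \ref{thm1} and a totally-real orbit argument to show $Z(S)^0\,=\,S$ and hence $Z(S^{\mathbb{C}})\,=\,S^{\mathbb{C}}$, and then uses the same rank observation to conclude $S^{\mathbb{C}}$ is a maximal torus. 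What your approach buys is brevity, at the cost of outsourcing the crux (existence of a $\sigma$-stable maximal torus) to a substantial external theorem -- the canonical reference is Borel's \emph{Linear Algebraic Groups}, Theorem 18.2, or Grothendieck, rather than the lecture notes cited as \cite{Bo}; in characteristic zero one can also get it from the existence of Cartan subalgebras of $\mathrm{Lie}(L)$. What the paper's approach buys is self-containedness in the spirit of the note, and the sharper conclusion that \emph{any} maximal connected abelian subgroup of semisimple elements of $L$ is itself a Cartan subgroup of $G$, which is the form of the statement exploited afterwards (e.g. in Corollary \ref{cor4} and Proposition \ref{t1}). One caution: your sketched ``direct'' descent argument (the cocycle $\sigma(g)g\,\in\,N_{L^{\mathbb{C}}}(T)$ together with an appeal to unirationality) is not a complete proof as written -- the standard argument proceeds by Zariski density of $L^{\mathbb{C}}({\mathbb{R}})$ and choice of a real regular semisimple element whose centralizer's identity component supplies the torus -- but since you also invoke the standard theorem itself, this does not affect the correctness of your proof.
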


\begin{proof}
Let $\mathfrak{l}$ be the Lie algebra of $L$. As $L$ is algebraic, the Lie
algebra $\mathfrak{l}$ contains both the semisimple and nilpotent part of
every element of it. If $\mathfrak{l}$ contains only nilpotent elements,
then $\mathfrak{l}$ can be conjugated over the reals to strictly upper
triangular form and the same goes for the complexification
${\mathfrak{l}}^{\mathbb{C}}$ and $L^{\mathbb{C}}$.

Thus $L$ has connected abelian subgroups consisting only of semisimple
elements. Let $S$ be such a maximal subgroup.

Working with $G/U$, where $U$ is the unipotent radical of $G$, we may assume
that $G$ is reductive.

Using Proposition \ref{thm1}, the real points of the connected component,
containing the identity element, of the Zariski closure of $S$ contains only
semisimple elements. Thus $S$ is real algebraic and its centralizer $Z(S)$
in $L$ is also real algebraic.

The group $Z(S^{\mathbb{C}})/S^{\mathbb{C}}$ is reductive algebraic and the
$Z(S)$ orbit of the point
\begin{equation*}
\xi_0\,=\, e S^{\mathbb{C}}\,\in\, Z(S^{\mathbb{C}})/S^{\mathbb{C}}
\end{equation*}
for the left--translation action is embedded as a totally real subgroup.
Also, $Z(S)/S$ fibers over the $Z(S)$ orbit of $\xi_0$ with a finite fiber.
Thus if the Lie algebra of $Z(S)/S$ is nontrivial, then it must have a
nonzero semisimple element. This would contradict the maximality of $S$.
Hence we conclude that the connected component $Z(S)^0\,=\, S$.

As the Lie algebra of $Z(S^{\mathbb{C}})$ coincides with the
complexification of the Lie algebra of $Z$, and the Lie algebra of $Z(S)$ is
also the Lie algebra of $S$, we see that $Z(S^{\mathbb{C}})\,=\, S^{\mathbb{C}}$.

Finally, $Z(S^{\mathbb{C}})$ contains a maximal torus $T$ of $L^{\mathbb{C}}$
and therefore of $G^{\mathbb{C}}$. But as $Z(S^{\mathbb{C}})\,=\, S^{\mathbb{C}}$, it
is itself an algebraic torus. Therefore, $T\,=\, S^{\mathbb{C}}$,
which means that $S$ is a Cartan subgroup of $G$.
\end{proof}

\begin{corollary}
\label{cor4} Every semisimple element of $\mathrm{Lie}(G)$ lies in some
Cartan subalgebra of $\mathrm{Lie}(G)$.
\end{corollary}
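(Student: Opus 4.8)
The goal is to show every semisimple $X \in \mathfrak{g} = \mathrm{Lie}(G)$ sits in a Cartan subalgebra of $\mathfrak{g}$. The plan is to produce, from $X$, an algebraic subgroup $L$ of $G$ whose complexification $L^{\mathbb{C}}$ contains a maximal torus of $G^{\mathbb{C}}$, and then invoke Proposition \ref{prop2} to get a Cartan subgroup $C \subset L$; its Lie algebra will be a Cartan subalgebra of $\mathfrak{g}$, and one must arrange that it contains $X$.

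First I would let $S_0$ be the connected subgroup of $G$ with Lie algebra spanned over $\mathbb{R}$ by $X$ together with (if necessary) enough additional commuting semisimple elements; more precisely, start from the one-parameter group $\{\exp(tX) : t \in \mathbb{R}\}$ and take $S$ to be a maximal connected abelian subgroup of $G$ consisting of semisimple elements and containing this one-parameter group. By the argument used at the start of the proof of Proposition \ref{prop2} (via Proposition \ref{thm1}), the real points of the identity component of the Zariski closure of $S$ consist only of semisimple elements, so $S$ is real algebraic. Now apply Proposition \ref{prop2} with $L = S$: I need to check that $S^{\mathbb{C}}$ contains a maximal torus of $G^{\mathbb{C}}$. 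This is the crux. Since $S$ is a maximal connected abelian semisimple subgroup of $G$, its complexification $S^{\mathbb{C}}$ is a maximal connected abelian subgroup of $G^{\mathbb{C}}$ consisting of semisimple elements — one checks maximality by noting that any strictly larger such subgroup over $\mathbb{C}$ would, by taking real forms as in the proof of Proposition \ref{prop2} (the fibration $Z(S^{\mathbb{C}})/S^{\mathbb{C}}$ argument), force a strictly larger $S$ over $\mathbb{R}$, a contradiction. A maximal connected abelian subgroup of $G^{\mathbb{C}}$ consisting of semisimple elements in a reductive (or general) algebraic group over $\mathbb{C}$ is precisely a maximal torus, so $S^{\mathbb{C}}$ is a maximal torus of $G^{\mathbb{C}}$; in particular it contains one.

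Proposition \ref{prop2} then gives that $S$ contains a Cartan subgroup $C$ of $G$; but $S$ is already connected abelian semisimple and $C^{\mathbb{C}}$ is a maximal torus of $G^{\mathbb{C}}$ contained in $S^{\mathbb{C}}$, while $S^{\mathbb{C}}$ is itself such a torus, so $C^{\mathbb{C}} = S^{\mathbb{C}}$ and hence $C = S$. Thus $S$ itself is a Cartan subgroup, $\mathrm{Lie}(S)$ is a Cartan subalgebra of $\mathfrak{g}$, and it contains $X$ by construction. The main obstacle I anticipate is the verification that a maximal connected abelian semisimple subgroup of the real group $G$ complexifies to a maximal torus of $G^{\mathbb{C}}$ — equivalently that maximality is preserved under complexification; this is exactly the content extracted from the centralizer/fibration computation in the proof of Proposition \ref{prop2}, so I would organize the argument to reuse that computation verbatim rather than reprove it. A secondary technical point is reducing to the reductive case (mod the unipotent radical $U$ of $G$): a semisimple element of $\mathfrak{g}$ maps to a semisimple element of $\mathrm{Lie}(G/U)$, a Cartan subalgebra of $\mathrm{Lie}(G/U)$ lifts to one in $\mathfrak{g}$, and one checks the lift can be chosen through $X$ — but this is routine given the splitting of the filtration used in the proof of Proposition \ref{prop1}.
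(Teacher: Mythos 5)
Your route is genuinely different from the paper's, and its core idea is sound but is not quite usable in the packaged form you propose. The paper does not try to show that a maximal connected abelian semisimple subgroup through $X$ is itself a Cartan subgroup. Instead it sets $H$ equal to the identity component of the real Zariski closure of $\{\exp(tX)\}$, applies Proposition \ref{prop2} as a black box to $L\,=\,Z_G(H)$ --- the hypothesis being verified by the standard fact that the centralizer of a torus (here $H^{\mathbb{C}}$) in $G^{\mathbb{C}}$ contains a maximal torus --- and then transfers $X$ into the resulting Cartan subgroup $C$ by noting that $CH$ is connected, its real Zariski closure's identity component consists of semisimple elements, hence $CH\,=\,C$, so $H\,\subset\,C$ and $X\,\in\,\mathrm{Lie}(C)$. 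This centralizer trick is exactly what lets the paper cite Proposition \ref{prop2} as stated and still force the Cartan subgroup to pass through $X$.

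Two concrete weak points in your version. First, the appeal to Proposition \ref{prop2} with $L\,=\,S$ is circular as stated: for $L\,=\,S$ the hypothesis ``$S^{\mathbb{C}}$ contains a maximal torus of $G^{\mathbb{C}}$'' already says (since $S^{\mathbb{C}}$ is itself a torus) that $S^{\mathbb{C}}$ \emph{is} a maximal torus, i.e.\ that $S$ is a Cartan subgroup --- the desired conclusion. So the proposition buys you nothing; what you are really using is the stronger statement hidden in its \emph{proof}, namely that every maximal connected abelian semisimple subgroup of a reductive $G$ satisfies $Z_{G^{\mathbb{C}}}(S^{\mathbb{C}})\,=\,S^{\mathbb{C}}$. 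Reusing that fibration computation with $L\,=\,G$ is legitimate in the reductive case, but you should present it as a strengthening of Proposition \ref{prop2}, not as an application of it. Second, because you must reopen the proof rather than cite the proposition, you inherit its reduction modulo the unipotent radical $U$, and now you must carry the specific element $X$ through it; your claim that lifting a Cartan subalgebra of $\mathrm{Lie}(G/U)$ to one of $\mathfrak{g}$ \emph{through} $X$ is ``routine given the splitting of the filtration used in the proof of Proposition \ref{prop1}'' does not work as written --- that filtration exists only for solvable groups with all real eigenvalues, not for a general $G$, and the lifting-through-$X$ statement is essentially another instance of the corollary (for the solvable preimage of a Cartan subgroup of $G/U$), so it needs an actual argument. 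The paper's route avoids both issues, which is what its extra step via $Z_G(H)$ and the ``$CH\,=\,C$'' argument is for.
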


\begin{proof}
Let $X$ be a real semisimple element of $\mathrm{Lie}(G)$. Then $X$ is
tangent to the connected component $H$ of the real Zariski closure of the
one-parameter subgroup generated by $X$. Therefore, as the centralizer
$Z(H^{\mathbb{C}})$ of $H^{\mathbb{C}}$ in $G^{\mathbb{C}}$ contains a maximal
torus of $G^{\mathbb{C}}$, we conclude that the centralizer $Z(H)$ of $H$ in
$G$ contains a Cartan subgroup, say $C$, of $G$. But then $CH$ is a
connected group and the connected component, containing the identity
element, of its real Zariski closure consists of semisimple elements.
Therefore, $CH\,=\, C$ and $H$ is contained in $C$. Consequently, $X$ is in
the Lie algebra of $C$.
\end{proof}

For a different proof of Corollary \ref{cor4}, see e.g. \cite[p. 420]{He}.

Another consequence of Proposition \ref{prop2} is the following result of
\cite{Mo}.

\begin{proposition}[\protect\cite{Mo}]
\label{t1} Let $P$ be a subgroup of a real semisimple group $G$ with
$P^{\mathbb{C}}$ a parabolic subgroup of $G^{\mathbb{C}}$. Then $P$ contains a
noncompact Cartan subgroup of $G$.
\end{proposition}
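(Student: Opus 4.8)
The plan is to deduce Proposition~\ref{t1} from Proposition~\ref{prop2} by checking its single hypothesis: that $P^{\mathbb{C}}$ contains a maximal torus of $G^{\mathbb{C}}$. This is immediate, since any parabolic subgroup of the complex semisimple group $G^{\mathbb{C}}$ contains a Borel subgroup, and every Borel subgroup contains a maximal torus. Hence Proposition~\ref{prop2} applies verbatim and yields a Cartan subgroup $C$ of $G$ contained in $P$.

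The remaining point is to show that $C$ can be chosen noncompact. First I would recall the structure of Cartan subgroups of a real semisimple group: writing $C = C_{\mathrm{split}} \cdot C_{\mathrm{cpt}}$, the group $C$ is compact precisely when its split part is trivial, i.e.\ when $C^{\mathbb{C}}$, as a torus defined over $\mathbb{R}$, is anisotropic. So I must produce inside $P$ a semisimple element with a real eigenvalue different from $\pm 1$ — equivalently, a nontrivial one-parameter subgroup $\{\exp(tX)\mid t\in\mathbb{R}\}$ with $X$ semisimple and having real eigenvalues — and then invoke Corollary~\ref{cor4} to place $X$ in a Cartan subalgebra, which then sits inside $P$ by the maximality argument of Proposition~\ref{prop2} (or directly: the Cartan subgroup through such an $X$ centralizes $X$, and one re-runs the centralizer argument). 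The source of such an element is the parabolic itself: $P^{\mathbb{C}}$ being a proper parabolic, it is the normalizer of a nontrivial unipotent radical $R_u(P^{\mathbb{C}})$, and the associated grading of $\mathfrak{g}^{\mathbb{C}}$ is defined by (a multiple of) a character that is real on a split torus of $P$. Concretely, $P$ contains the $AN$-part of a suitable Iwasawa-type decomposition adapted to $P$ (by the Mostow--Vinberg result quoted just before Proposition~\ref{prop2}), and the $A$-factor is a nontrivial split torus because $P$ is a proper subgroup of the noncompact group $G$; any generator of its Lie algebra is the desired semisimple element with real eigenvalues.

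Assembling: pick $X \in \mathrm{Lie}(A) \subset \mathrm{Lie}(P)$ nonzero, necessarily $\mathbb{R}$-semisimple with real eigenvalues. By Corollary~\ref{cor4}, $X$ lies in a Cartan subalgebra $\mathfrak{c}$ of $\mathrm{Lie}(G)$; the corresponding Cartan subgroup $C$ is noncompact since its Lie algebra contains the $\mathbb{R}$-diagonalizable element $X$ acting with a real eigenvalue $\ne 0$ in the adjoint representation, so $C$ has nontrivial split part. It remains to check $C \subset P$. For this I would argue as in the last paragraph of the proof of Proposition~\ref{prop2}: $C$ centralizes $X$, hence preserves the grading of $\mathfrak{g}^{\mathbb{C}}$ that defines $P^{\mathbb{C}}$, so $C^{\mathbb{C}}$ normalizes each graded piece and in particular $C^{\mathbb{C}} \subset P^{\mathbb{C}}$; taking real points of identity components gives $C \subset P$.

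The main obstacle is the last step — verifying $C \subset P$ for the \emph{noncompact} Cartan subgroup through $X$, rather than just for some Cartan subgroup as in Proposition~\ref{prop2}. One must be careful that the Cartan subalgebra supplied by Corollary~\ref{cor4} really does sit inside $\mathrm{Lie}(P)$ and not merely inside $\mathrm{Lie}(G)$; the centralizer/grading argument above handles this, but it relies on $X$ being chosen \emph{inside} $\mathrm{Lie}(P)$ with $\mathrm{ad}(X)$ having a nonzero real eigenvalue, which is exactly what the Mostow--Vinberg $AN$-reduction guarantees for a proper parabolic. Alternatively, one can avoid re-proving containment by running the entire argument of Proposition~\ref{prop2} with the extra bookkeeping that the maximal connected abelian semisimple subgroup $S$ of $L = P$ can be taken to contain $X$; then $S = C$ comes out noncompact automatically.
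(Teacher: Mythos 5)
Your first step matches the paper: since a parabolic subgroup of $G^{\mathbb{C}}$ contains a Borel and hence a maximal torus, Proposition \ref{prop2} gives a Cartan subgroup $C$ of $G$ inside $P$. The gap is in the noncompactness part. Your construction hinges on producing a nonzero $X\in\mathrm{Lie}(P)$ that is semisimple with real eigenvalues and, moreover, is (a multiple of) the grading element of a cocharacter \emph{defined over $\mathbb{R}$} that cuts out $P^{\mathbb{C}}$. You justify this by saying that $P$ contains the $AN$-part of an Iwasawa decomposition ``by the Mostow--Vinberg result quoted just before Proposition \ref{prop2}'', but that result says something else entirely: it conjugates a given connected solvable subgroup with real eigenvalues \emph{into} $AN$; it does not assert that $P$ contains a nontrivial split torus, let alone a real cocharacter defining $P^{\mathbb{C}}$. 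The latter fact is essentially Borel--Tits theory (a $k$-parabolic is $P(\lambda)$ for a $k$-rational $\lambda$), i.e.\ the structure theory of real parabolic subgroups --- which is precisely what the paper derives \emph{after} and \emph{from} Proposition \ref{t1}. As written, your argument is therefore circular (or assumes an unproved theorem at least as strong as the statement): the existence of a nonzero $\mathbb{R}$-diagonalizable semisimple element in $\mathrm{Lie}(P)$ is the whole content of the proposition. Note also that if $X\in\mathrm{Lie}(P)$ is merely semisimple with real eigenvalues but not a defining element of $P^{\mathbb{C}}$, your step ``$C$ centralizes $X$, hence $C^{\mathbb{C}}\subset P^{\mathbb{C}}$'' fails: e.g.\ in $G=\mathrm{SL}(2,\mathbb{R})\times\mathrm{SL}(2,\mathbb{R})$ with $P=B\times\mathrm{SL}(2,\mathbb{R})$ and $X=(0,H)$, the centralizer of $X$ is not contained in $P$, and a Cartan subalgebra through $X$ need not lie in $\mathrm{Lie}(P)$. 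So the centralizer/grading argument only works for the special $X$ whose existence is the unproved point.

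The paper closes this gap by a short contradiction argument you are missing: if the Cartan subgroup $C$ supplied by Proposition \ref{prop2} were compact, then complex conjugation (the real structure), acting through the compact torus $C^{\mathbb{C}}$, sends every root to its negative; since $P^{\mathbb{C}}$ is conjugation-invariant, the set of roots of its unipotent radical would then be stable under negation, so the unipotent radical would contain a pair of opposite root spaces and hence a copy of $\mathfrak{sl}(2,\mathbb{C})$ --- impossible in a nilpotent group. Hence $C$ itself is already noncompact, with no need to manufacture a split element in $\mathrm{Lie}(P)$ beforehand. If you want to salvage your route, you must first prove (not cite) that $P$ contains a nontrivial real split torus, and for a proof at the level of this paper that is exactly the root-negation argument above.
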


\begin{proof}
By Proposition \ref{prop2}, the above subgroup $P$ contains a Cartan
subgroup, say $C$, of $G$. If $C$ is compact, then, as $P^{\mathbb{C}}$ is
conjugation invariant, and conjugation maps any root to its negative, the
unipotent radical of $P^{\mathbb{C}}$ contains a subgroup whose Lie algebra
is $\text{sl}(2, {\mathbb{C}})$. This contradiction proves that $C$ must
contain a closed real diagonalizable subgroup.
\end{proof}

As shown in Proposition \ref{t1}, the parabolic subgroup
$P$ has a noncompact Cartan subgroup. We can write it
as $AC$, where $A$ is split and $C$ compact: by Proposition \ref{thm1}, both $A$ and
$C$ are real algebraic.

Now we take a basis of the Lie algebra of $AC$, with the basis of $A$ coming
before $C$. Under conjugation, and with this lexicographic ordering of the
roots, conjugation maps every root of $Z(A)$ -- the centralizer of $A$ -- into a
negative, while positive roots supported outside the simple system of roots
of $A$ are preserved.

Since centralizers of semisimple connected groups in a complex group are
Levi-complements, one can obtain in this way the structure of real parabolic
subgroups as well as a fine classification of maximal solvable subgroups,
starting from the existence of a real diagonalizable torus -- as shown in
Matsumoto \cite{Ma} and Mostow \cite{Mo}.

Other, more recent applications of the real algebraic groups are given in
\cite{Ch}.

\section*{Acknowledgements}

The second author acknowledges the support of the J. C. Bose Fellowship.


\end{document}